\newtheorem{thm}{Theorem}
\newtheorem{lem}{Lemma}
\DeclareMathOperator{\tr}{Tr}
\newcommand{\R}{\mathbb{R}}
\newcommand{\Z}{\mathbb{Z}}
\newcommand{\RR}{\mathcal{R}}
\newcommand{\cS}{\mathcal{S}}
\newcommand{\A}{\mathcal{A}}
\newcommand{\T}{\mathcal{T}}
\newcommand{\PP}{\mathcal{P}}
\begin{document}
\title[Butson Hadamard Matrices]{Butson-type complex Hadamard matrices \\
and association schemes on Galois rings of characteristic 4}
\author{Takuya Ikuta}
\address{Kobe Gakuin University, Kobe, 650-8586, Japan}
\email{ikuta@law.kobegakuin.ac.jp}
\author{Akihiro Munemasa}
\address{Tohoku University, Sendai, 980-8579, Japan}
\email{munemasa@math.is.tohoku.ac.jp}
\thanks{The work of A.M. was supported by JSPS KAKENHI grant number 17K05155.}

\date{December 17, 2017}
\keywords{association scheme, complex Hadamard matrix, Galois ring}
\subjclass[2010]{05E30,05B34}

\begin{abstract}
We consider nonsymmetric hermitian complex Ha\-da\-mard matrices belonging to
the Bose--Mesner algebra of commutative nonsymmetric association schemes.
First, we give a characterization of  the eigenmatrix of 
a commutative  nonsymmetric association scheme of class $3$
whose Bose--Mesner algebra contains  a nonsymmetric hermitian complex Hadamard matrix,
and show that such a complex Hadamard matrix is necessarily a Butson-type complex Hadamard matrix 
whose entries are $4$-th roots of unity.
We also give nonsymmetric association schemes $\mathfrak{X}$ of
class $6$ on Galois rings of characteristic 4, and
classify hermitian complex Hadamard matrices belonging to
the Bose--Mesner algebra of $\mathfrak{X}$.
It is  shown that such a matrix is again
necessarily a Butson-type complex Hadamard matrix
whose entries are $4$-th roots of unity.
\end{abstract}
\maketitle

\section{Introduction} \label{sec.Intro}
A complex Hadamard matrix is a square matrix 
$W$ of order $n$ which satisfies $W\overline{W}^\top= nI$ and
all of whose entries are complex numbers of absolute value $1$.
A complex Hadamard matrix is said to be Butson-type, 
if all of its entries are roots of unity.
In our earlier work \cite{MI}, we proposed a method to classify symmetric
complex Hadamard matrices belonging to the Bose--Mesner algebra
of a symmetric association scheme.
In this paper, we propose an analogous method to classify nonsymmetric
hermitian complex Hadamard matrices belonging to the Bose--Mesner algebra
of a commutative nonsymmetric association scheme.
First we consider nonsymmetric hermitian complex Hadamard matrices
belonging to the Bose--Mesner algebra of 
a commutative nonsymmetric association scheme of class $3$,
and then we show that the first eigenmatrix of such a scheme is characterized as
\begin{equation}\label{d3P1}
\begin{pmatrix}
1&a(2a-1)&a(2a-1)&2a-1\\
1&ai&-ai&-1\\
1&-ai&ai&-1\\
1&-a&-a&2a-1
\end{pmatrix},
\end{equation}
where $a$ is a positive integer and $i=\sqrt{-1}$.
Moreover, we show that such a complex Hadamard matrix is necessarily
a Butson-type complex Hadamard matrix 
whose entries are $4$-th roots of unity.
An association scheme with the first eigenmatrix (\ref{d3P1}) is
a nonsymmetric amorphous association scheme
%This scheme is a special case 
belonging to $L_{2a;1}$, according to \cite{IMY}.
%and
%This scheme  
An example of such an association scheme was 
constructed from a Galois ring of characteristic $4$  in \cite[Theorem 9]{IMY}.
% special case
%According to \cite{IMY}, this association scheme belongs 
%belonging to $L_{2b;1}$.
%We give concrete examples of such matrices by constructing
%association schemes of class $6$ on Galois rings of characteristic $4$.
Galois rings have been used to construct certain association
schemes. Yamada \cite{Y} used Galois rings of characteristic $4$
to construct distance-regular digraphs, which are nonsymmetric 
association schemes of class $3$. This construction was generalized in
\cite{IMY} to produce amorphous association schemes. Ma \cite{Ma} 
gave association schemes of class $3$ on Galois rings of characteristic $4$,
which do not come from fusions in any amorphous association schemes.
Moreover, certain properties of association schemes
obtained from Galois rings of odd characteristic
have been investigated in \cite{EP}.

Our second result of this paper is a construction of
nonsymmetric association schemes $\mathfrak{X}$ of
class $6$ on Galois rings of characteristic 4, 
whose first eigenmatrix is given by
\begin{equation} \label{P6}
(p_{i,j})_{\substack{0\leq i\leq 6 \\ 0\leq j\leq 6}}=\begin{pmatrix}
1&2b(b-1)&2b(b-1)&b&b&b-1&b\\
1&bi&-bi&0&0&-1&0\\
1&-bi&bi&0&0&-1&0\\
1&0&0&bi&-bi&b-1&-b \\
1&0&0&-bi&bi&b-1&-b \\
1&-2b&-2b&b&b&b-1&b \\
1&0&0&-b&-b&b-1&b
\end{pmatrix},
\end{equation}
where $b$ is a power of $4$.
We also classify hermitian complex Hadamard matrices belonging to
the Bose--Mesner algebra of $\mathfrak{X}$.
We show that such a matrix is necessarily a Butson-type matrix
whose entries are $4$-th roots of unity.

%%%%%%%%%%%%%%%%%%%%
\section{Association schemes and complex Hadamard matrices} \label{sec.AS}
In this section,
we consider hermitian matrices
belonging to the Bose--Mesner algebra of a commutative association scheme.
Assuming that all
entries are complex numbers with absolute value $1$, 
we find conditions under which such a matrix is a complex Hadamard matrix.
We refer the reader to \cite{BI,BCN} for undefined terminology in the
theory of association schemes.

Let $X$ be a finite set with $n$ elements, and let
$\mathfrak{X}=(X,\{R_i\}_{i=0}^d)$ be a commutative association scheme 
with the first eigenmatrix 
$P=(P_{i,j})$.
We let $\mathfrak{A}$ denote the Bose--Mesner algebra
spanned by the adjacency matrices $A_0,A_1,\ldots,A_d$ of $\mathfrak{X}$.
Then the adjacency matrices are expressed as
\begin{equation}\label{eq:pij}
A_j=\sum_{i=0}^d P_{i,j}E_i \quad (j=0,1,\ldots,d),
\end{equation}
where $E_0=\frac{1}{n}J,E_1,\ldots,E_d$ are the primitive idempotents of $\mathfrak{A}$.
Let $w_0=1$ and $w_1,\ldots,w_d$ be complex numbers with absolute value $1$.
Set
\begin{equation}\label{eq:W}
W=\sum_{j=0}^d w_jA_j\in\mathfrak{A}.
\end{equation}
Define
\begin{equation}\label{eq:gamma}
\gamma_k=\sum_{j=0}^d w_jP_{k,j} \quad (k=0,1,\ldots,d).
\end{equation}
By \eqref{eq:pij}, \eqref{eq:W} and \eqref{eq:gamma} we have
\begin{equation}\label{eq:WE}
W=\sum_{k=0}^d \gamma_kE_k.
\end{equation}
Let $X_j$ {\rm($1\leq j\leq d$)} be indeterminates. For $k=1,2,\dots,d$, 
let $e_k$ be the polynomial defined by
\begin{equation}\label{eq:ek}
e_k=1+2\left(\sum_{j=1}^d P_{k,j}X_j+
\sum_{1\leq j_1<j_2\leq d}P_{k,j_1}P_{k,j_2}X_{j_1}X_{j_2}\right)
+\sum_{j=1}^dP_{k,j}^2X_j^2-n.
\end{equation}
Then we have the following.

\begin{lem}\label{lem:equiv}
Assume that the matrix $W$ given in \eqref{eq:W} is a hermitian matrix.
Then the following statements are equivalent:
\begin{itemize}
\item[\rm{(i)}] $W$ is a complex Hadamard matrix,
\item[\rm{(ii)}] $\gamma_k^2=n$ for $k=1,\ldots,d$,
\item[\rm{(iii)}] $(w_j)_{1\leq j\leq d}$ is a common zero of $e_k$ {\rm($k=1,\ldots,d$)}.
\end{itemize}
\end{lem}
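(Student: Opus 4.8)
The plan is to prove the equivalence by reducing everything to the spectral decomposition \eqref{eq:WE}. First I would record three facts that drive the argument. Since every off-diagonal entry of $W$ equals one of $w_1,\dots,w_d$ and the diagonal entries equal $w_0=1$, and all the $w_j$ have absolute value $1$, \emph{every} entry of $W$ automatically has absolute value $1$; consequently condition (i) is equivalent to the single matrix identity $W\overline{W}^\top=nI$. Next, because $\mathfrak{X}$ is commutative, $\mathfrak{A}$ is closed under conjugate-transpose and hence is a commutative $*$-algebra whose primitive idempotents $E_k$ are Hermitian and pairwise orthogonal with $\sum_{k=0}^d E_k=I$. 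In particular the hypothesis that $W$ is hermitian gives $\overline{W}^\top=W$, so the Hadamard condition becomes $W^2=nI$.

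Using \eqref{eq:WE} and the orthogonality $E_kE_l=\delta_{kl}E_k$, I would compute $W^2=\sum_{k=0}^d\gamma_k^2E_k$ while $nI=\sum_{k=0}^d nE_k$. Since the $E_k$ are linearly independent, $W^2=nI$ holds if and only if $\gamma_k^2=n$ for \emph{all} $k=0,1,\dots,d$. This already yields (i)$\Rightarrow$(ii) upon restricting to $k\ge1$. For the converse (ii)$\Rightarrow$(i) the only thing missing is the $k=0$ equation $\gamma_0^2=n$, and here I would bring in a trace computation: on one hand $\tr(W^2)=\tr(W\overline{W}^\top)=\sum_{x,y}|W_{xy}|^2=n^2$ because all entries have modulus $1$; on the other hand $\tr(W^2)=\sum_{k=0}^d\gamma_k^2 m_k$ where $m_k=\tr(E_k)$, with $m_0=1$ (as $E_0=\frac1n J$) and $\sum_{k=0}^d m_k=n$. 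Substituting $\gamma_k^2=n$ for $k\ge1$ gives $\gamma_0^2+n(n-1)=n^2$, whence $\gamma_0^2=n$; thus $W^2=nI$ and (i) follows.

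For (ii)$\iff$(iii) I would verify the polynomial identity $e_k(w_1,\dots,w_d)=\gamma_k^2-n$ directly. Since the first column of $P$ is all ones, $\gamma_k=1+\sum_{j=1}^d w_jP_{k,j}$, and expanding the square of $\sum_{j=1}^d w_jP_{k,j}$ into its diagonal part $\sum_j P_{k,j}^2w_j^2$ and its cross terms $2\sum_{j_1<j_2}P_{k,j_1}P_{k,j_2}w_{j_1}w_{j_2}$ reproduces exactly the expression \eqref{eq:ek} defining $e_k$, evaluated at $X_j=w_j$. Hence $\gamma_k^2=n$ for $k=1,\dots,d$ is literally the statement that $(w_j)$ is a common zero of $e_1,\dots,e_d$.

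The genuinely nontrivial step is the trace computation that forces $\gamma_0^2=n$ in the direction (ii)$\Rightarrow$(i); without it one could only conclude that $W^2=nI$ holds on the orthogonal complement of the all-ones eigenspace $E_0$. Everything else---reducing (i) to $W^2=nI$, the Hermiticity of the $E_k$, and the bookkeeping identity for $e_k$---is routine, so I expect that to be where the essential input is used, namely that all entries have modulus $1$ and therefore $\sum_{x,y}|W_{xy}|^2=n^2$.
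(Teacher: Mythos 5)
Your proposal is correct and follows essentially the same route as the paper: the spectral decomposition $W^2=\sum_k\gamma_k^2E_k$ for (i)$\Rightarrow$(ii), the trace computation $\tr(W^2)=n^2$ (equivalently, that each diagonal entry of $W^2$ equals $n$) to recover $\gamma_0^2=n$ in the converse direction, and the expansion of $\gamma_k^2$ using $P_{k,0}=1$ to identify $e_k(w_1,\dots,w_d)=\gamma_k^2-n$ for (ii)$\iff$(iii). The only difference is expository: you make explicit some facts the paper leaves implicit (entries of $W$ automatically have modulus $1$, and linear independence of the $E_k$).
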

\begin{proof}
By \eqref{eq:WE} we have $W\overline{W}^T=W^2=\sum_{k=0}^d\gamma_k^2E_k$.
Therefore, (i) implies (ii).
To prove the converse, it is enough to show that $\gamma_0^2=n$.
Since
\begin{align*}
(W^2)_{j,j}&=\sum_{k=1}^nW_{j,k}\overline{W_{j,k}}\\
&=n,
\end{align*}
the diagonal entries of $W^2$ are all $n$.
Thus,
\begin{align*}
n^2&=\tr W^2\\
&=\sum_{k=0}^d\gamma_k^2\tr E_k\\
&=\gamma_0^2+\sum_{k=1}^dn\tr E_k\\
&=\gamma_0^2+n\tr(I-E_0)\\
&=\gamma_0^2+n(n-1).
\end{align*}
Hence $\gamma_0^2=n$.
By \eqref{eq:gamma} we have
\[
\gamma_k^2=1+2\left(\sum_{j=1}^d P_{k,j}w_j+
\sum_{1\leq j_1<j_2\leq d}P_{k,j_1}P_{k,j_2}w_{j_1}w_{j_2}\right)
+\sum_{j=1}^dP_{k,j}^2w_j^2
\]
for $k=1,\dots,d$.
Therefore, the equivalence of (ii) and (iii) follows.
\end{proof}

%%%%%%%%%%%%%%%%%%%%
\section{Hermitian complex Hadamard matrices 
 and nonsymmetric association schemes of class $3$}\label{sec.3-class}
In this section, we classify nonsymmetric hermitian complex Hadamard matrices 
belonging to the Bose--Mesner algebra of a nonsymmetric commutative association scheme 
$\mathfrak{X}=(X,\{R_i\}_{i=0}^3)$ of class $3$,
where $R_1^\top=R_2$, $R_3$ symmetric.
We use the following result.

\begin{lem}[{\cite[Lemma in (5.3)]{S}}]\label{lem:song}
Let 
%\begin{equation}\label{eq:P}
\[
\begin{pmatrix}
1&\frac{k_1}{2}&\frac{k_1}{2}&k_2\\
1&\frac12(r+bi)&\frac12(r-bi)&-(r+1)\\
1&\frac12(r-bi)&\frac12(r+bi)&-(r+1)\\
1&\frac{s}{2}&\frac{s}{2}&-(s+1)
\end{pmatrix}
\]
%\end{equation}
be the first eigenmatrix of $\mathfrak{X}$, where $r,s$ are integers and $i^2=-1$.
Then one of the following holds.
\begin{itemize}
\item[\rm{(i)}] $(r,s,b^2)=(0,-(k_2+1),\frac{k_1(k_2+1)}{k_2})$,
\item[\rm{(ii)}] $(r,s,b^2)=(-(k_2+1),0,(1+k_2)(1+k_1+k_2))$,
\item[\rm{(iii)}] $(r,s,b^2)=(-1,k_1,k_1+1)$.
\end{itemize}
\end{lem}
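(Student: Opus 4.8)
The plan is to read the multiplicities off the eigenmatrix and extract enough scalar identities from the orthogonality relations and from the vanishing of traces to pin down $r,s,b^2$. Since $R_1^\top=R_2$, the primitive idempotents occur in a conjugate pair $E_2=\overline{E_1}$, so the multiplicities are $(m_0,m_1,m_2,m_3)=(1,m,m,m')$ with $1+2m+m'=n:=1+k_1+k_2$. First I would use $\tr(A_j)=\sum_i m_iP_{i,j}=0$ for $j\neq0$; for $j=1$ this gives the linear relation $k_1+2mr+m's=0$, and together with $2m+m'=k_1+k_2$ it determines $m$ and $m'$ as rational functions of $r,s,k_1,k_2$.

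Next I would bring in second-order data. Because $R_1$ is nonsymmetric, $R_1\cap R_1^\top=R_1\cap R_2=\emptyset$, so $A_1^2$ has zero diagonal and hence $\tr(A_1^2)=\sum_i m_iP_{i,1}^2=0$; column orthogonality gives $\sum_i m_i|P_{i,1}|^2=nk_1/2$. Taking the difference and the sum of these two identities yields, respectively,
\[
b^2=\frac{nk_1}{2m},\qquad 2mr^2+m's^2=k_1(1+k_2).
\]
The second relation, combined with the two linear relations above, is an overdetermined linear system for $(2m,m')$ whose coefficient ``moments'' are $1,r,r^2$ and $1,s,s^2$; its consistency is the vanishing of the determinant
\[
\det\begin{pmatrix}
1 & 1 & k_1+k_2\\
r & s & -k_1\\
r^2 & s^2 & k_1(1+k_2)
\end{pmatrix}=0.
\]
Removing the factor $r-s$ (nonzero, for otherwise $A_1+A_2$ would have at most two distinct eigenvalues and $\mathfrak{X}$ would degenerate) collapses this to the key identity
\[
\bigl((k_1+k_2)r+k_1\bigr)\bigl((k_1+k_2)s+k_1\bigr)=-k_1k_2\,n .
\]
This factorization is the main device: it exhibits $X:=(k_1+k_2)r+k_1$ and $Y:=(k_1+k_2)s+k_1$ as a complementary pair of integer divisors of $-k_1k_2n$, each congruent to $k_1$ modulo $k_1+k_2$.

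The remaining and hardest step is to show that only three divisor splittings survive. Here I would impose that $m$ and $m'$ are \emph{positive integers}: in the notation above $2m=-Y(k_1+k_2)/(X-Y)$ and $m'=X(k_1+k_2)/(X-Y)$, so integrality of the multiplicities forces strong divisibility constraints on the factor pair $(X,Y)$ with $XY=-k_1k_2n$. I expect these constraints, together with positivity and the condition that $\tfrac12(r\pm bi)$ be an algebraic integer (i.e. $r^2+b^2\equiv0\pmod4$), to eliminate all pairs except
\[
(X,Y)\in\{(k_1,-k_2n),\ (-k_2n,k_1),\ (-k_2,k_1n)\},
\]
which translate back into cases (i), (ii), (iii); substituting $b^2=nk_1/(2m)$ then reproduces the stated value of $b^2$ in each case.

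Conceptually, $A_1+A_2$ is a regular graph with exactly the three eigenvalues $k_1,r,s$, hence a strongly regular graph whose complement is $A_3$, and the three cases are precisely its imprimitive configurations (complete multipartite, in two ways according to whether the eigenvalue $0$ lies on $E_1\oplus E_2$ or on $E_3$, and a disjoint union of cliques). I would use this picture to organize and cross-check the divisor analysis. The main obstacle is genuinely this last Diophantine step: the factorization reduces the problem to finitely many divisor pairs, but ruling out the spurious ones uniformly in $(k_1,k_2)$ — equivalently, proving that the underlying strongly regular graph cannot be primitive — is where the real work lies, and it may require the integrality of the strongly regular parameters or a vanishing Krein parameter rather than elementary divisibility alone.
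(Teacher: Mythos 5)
First, a point of orientation: the paper does not prove this lemma at all --- it is quoted from Song \cite[Lemma in (5.3)]{S} --- so your attempt has to stand on its own, and unfortunately it does not. Your preliminary computations are correct: the multiplicity pattern $(1,m,m,m')$, the relations $2m+m'=k_1+k_2$, $2mr+m's=-k_1$, $2mr^2+m's^2=k_1(1+k_2)$, the formula $b^2=nk_1/(2m)$, and the factorization $\bigl((k_1+k_2)r+k_1\bigr)\bigl((k_1+k_2)s+k_1\bigr)=-k_1k_2n$ all check out. The trouble is that this ``key identity'' carries no information about the nonsymmetric splitting: your three moment equations are precisely $1+2m+m'=n$, $\tr B=0$ and $\tr B^2=nk_1$ for the symmetrized graph $B=A_1+A_2$, and these hold for \emph{every} strongly regular graph, primitive or not. (The only place nonsymmetry enters is in defining $b^2=nk_1/(2m)$ and in making one multiplicity even.) So the divisor factorization cannot by itself distinguish the three imprimitive cases from anything else.

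Moreover, the gap is not merely ``hard''; it is provably unbridgeable by the constraints you list (positive integral multiplicities, divisibility of the factor pair, and $r^2+b^2\equiv 0\pmod 4$). Take the parameters of the $4\times 4$ rook's graph (an SRG$(16,6,2,2)$): $n=16$, $k_1=6$, $k_2=9$, $r=2$, $s=-2$, $2m=6$, $m'=9$. Then $b^2=nk_1/(2m)=16$ is an integer, $r^2+b^2=20\equiv 0\pmod 4$, $s$ is even, $X=36$ and $Y=-24$ satisfy $XY=-864=-k_1k_2n$ with both $\equiv k_1\pmod{k_1+k_2}$, and all your divisibility conditions hold --- yet $(r,s)=(2,-2)$ is none of (i), (ii), (iii), since this SRG is primitive. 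What actually kills this parameter set is the ingredient your proposal never uses: nonnegativity of the intersection numbers of the putative nonsymmetric scheme. Writing $A_1^2=\sum_k p_{11}^k A_k$ and comparing eigenvalues row by row, with $p_{11}^0=0$ because $R_1\cap R_1^\top=\emptyset$, forces in this example $p_{11}^1-p_{11}^2=r=2$ and $p_{11}^1+p_{11}^2=0$, hence $p_{11}^2=-1<0$, a contradiction. Note also that conditions intrinsic to the SRG alone (Krein conditions, absolute bound, integrality of its parameters) can never close the gap, since the rook's graph genuinely exists as an SRG; the obstruction must be expressed through the structure constants of the refined four-relation scheme, which is in effect how Song's proof proceeds. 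Your closing caveat correctly senses this, but sensing it is not the same as supplying it: as written, the argument establishes only necessary conditions satisfied by infinitely many spurious parameter sets, not the trichotomy.
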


Note that, if we put
\begin{equation}\label{eq:0930}
(k_1,k_2)=(2a(2a-1),2a-1),
\end{equation}
in part (i) of Lemma~\ref{lem:song}, 
we have the matrix (\ref{d3P1}).

Let $\mathfrak{A}$ be the Bose--Mesner algebra of $\mathfrak{X}$ 
which is the linear span of the adjacency matrices
$A_0,A_1,A_2,A_3$ of $\mathfrak{X}$, where $A_1^\top=A_2$, $A_3$ symmetric.
Let $w_0=1$ and $w_1,w_2,w_3$ be complex numbers of absolute value $1$.
Set
\begin{equation} \label{eq:w0W}
W=w_0A_0+w_1A_1+w_2A_2+w_3A_3\in\mathfrak{A}.
\end{equation}
Then our first main theorem is the following.

\begin{thm}\label{thm:main0925}
Assume that the matrix \eqref{eq:w0W} is a 
hermitian complex Ha\-da\-mard matrix and not a real Ha\-da\-mard matrix.
Then $\mathfrak{X}$ is a nonsymmetric association scheme 
whose unique nontrivial symmetric relation 
consists of $2a$ cliques of size $2a$,
and the first eigenmatrix of $\mathfrak{X}$ is given by \eqref{d3P1}. 
Moreover, $w_1=\pm i$ and $w_3=1$.
\end{thm}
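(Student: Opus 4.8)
The plan is to first read off the algebraic constraints forced by the hermitian hypothesis, then push the complex Hadamard condition through Lemma~\ref{lem:equiv}, and finally test the outcome against the three parameter families of Lemma~\ref{lem:song}. Since $A_0$ and $A_3$ are symmetric while $A_1^\top=A_2$, imposing $W=\overline{W}^\top$ on \eqref{eq:w0W} gives $w_2=\overline{w_1}$ and $w_3\in\{\pm1\}$; write $w_1=e^{i\theta}$. As $W$ is hermitian its eigenvalues $\gamma_k$ are real, so by Lemma~\ref{lem:equiv} the Hadamard condition becomes $\gamma_k=\pm\sqrt n$ for $k=1,2,3$, where $n=1+k_1+k_2$. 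Substituting the rows of the eigenmatrix of Lemma~\ref{lem:song} into \eqref{eq:gamma} and collapsing the conjugate terms yields
\[
\gamma_1=u-v,\qquad \gamma_2=u+v,\qquad \gamma_3=1+s\cos\theta-w_3(s+1),
\]
where $u=1+r\cos\theta-w_3(r+1)$ and $v=b\sin\theta$.

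Next I would use $\gamma_1^2=\gamma_2^2$, i.e.\ $(u-v)^2=(u+v)^2$, to get $uv=0$. Nonsymmetry of $\mathfrak{X}$ forces $b\neq0$ (otherwise rows $1$ and $2$ of the eigenmatrix coincide), and the hypothesis that $W$ is not real means $w_1\notin\R$, so $\sin\theta\neq0$; hence $v\neq0$ and therefore $u=0$, after which $\gamma_1^2=\gamma_2^2=b^2\sin^2\theta=n$ automatically. Running through the three cases of Lemma~\ref{lem:song}: in case (iii) one has $r=-1$, so $u=0$ reads $1-\cos\theta=0$, contradicting $\sin\theta\neq0$; in case (ii) one has $s=0$, so $\gamma_3=1-w_3$, and combining $\gamma_3^2=n$, $u=0$ and $b^2\sin^2\theta=n$ forces $w_3=-1$, $n=4$ and then $k_2=\frac13$, which is not an integer. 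Both cases are excluded, leaving case (i).

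In case (i) we have $r=0$, so $u=0$ immediately gives $w_3=1$. The surviving equations $b^2\sin^2\theta=n$ and $\gamma_3^2=(k_2+1)^2(1-\cos\theta)^2=n$, together with $b^2=k_1(k_2+1)/k_2$ and $n=1+k_1+k_2$, impose two conditions on the single unknown $\theta$; eliminating $\cos\theta$ gives the relation $4k_1^2(k_2+1)^2=(1+k_1+k_2)\bigl(k_1+k_2(k_2+1)\bigr)^2$. I expect this to be the main obstacle. It is a cubic in $k_1$ with the evident root $k_1=k_2(k_2+1)$; factoring this out leaves a quadratic whose discriminant is $(k_2+1)^3(k_2+9)$, so a second integer root would require $(k_2+1)(k_2+9)=k_2^2+10k_2+9$ to be a perfect square. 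Since this quantity lies strictly between $(k_2+4)^2$ and $(k_2+5)^2$ for $k_2\geq4$ (and is checked directly for smaller $k_2$), it never is, and $k_1=k_2(k_2+1)$ is forced. This is equivalent to $b^2=n$, hence $\sin^2\theta=1$, $\cos\theta=0$, and $w_1=\pm i$.

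It remains to fix the parity and extract the combinatorial structure. The entry $P_{1,1}=\frac12(r+bi)=\frac12 bi$ is an eigenvalue of the integer matrix $A_1$, hence an algebraic integer, which forces $b$ to be even; writing $b=2a$ gives $k_2=2a-1$, $k_1=2a(2a-1)$, $n=4a^2$, and by \eqref{eq:0930} the eigenmatrix becomes exactly \eqref{d3P1}, with $w_3=1$ and $w_1=\pm i$ as asserted. Finally, column $3$ of \eqref{d3P1} shows $A_3$ has only the eigenvalues $2a-1$ and $-1$, so $A_3^2=(2a-2)A_3+(2a-1)I$; thus two non-adjacent vertices have no common $R_3$-neighbour, forcing every connected component of $(X,R_3)$ to be a clique of size $2a-1+1=2a$. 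Computing $m_3=\frac{n}{\sum_{j}|P_{3,j}|^2/k_j}=2a-1$ from the orthogonality relations shows the valency $2a-1$ occurs as an eigenvalue of $A_3$ with multiplicity $m_0+m_3=2a$, so there are exactly $2a$ such cliques, together covering all $n=4a^2$ vertices.
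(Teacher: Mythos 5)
Your proof is correct, and although it rests on the same two inputs as the paper's proof (Lemma~\ref{lem:equiv} and Song's Lemma~\ref{lem:song}), the route through them is genuinely different. The paper never introduces $\theta$ or the real eigenvalues: it manipulates the polynomials $e_k$ symbolically, obtaining \eqref{eq:1-1} (which is exactly your $u=0$) from the factorization of $e_1-e_2$, and its case split is on $w_3=\pm1$, with $w_3=1$ forcing Song's case (i) and $w_3=-1$ forcing case (ii), so that case (iii) is excluded only implicitly, whereas you dispose of it explicitly. In case (i) the paper pins down $w_1$ from a factorization of $e_3-e_1$ and lands on the Diophantine equation $(b-s)(b+s)((s+1)b^4-(s-2)s^2b^2+s^4)=0$ in the variables $(s,b)$, killing the quartic factor because $(s-4)^2-16$ cannot be a perfect square for $s<-1$; you instead eliminate $\theta$ to get a cubic in $k_1$ with root $k_1=k_2(k_2+1)$ (equivalent to the paper's $s=-b$) and kill the residual quadratic because $(k_2+1)(k_2+9)$ is never a perfect square --- I have checked your factorization, the discriminant $(k_2+1)^3(k_2+9)$, and the squeeze between $(k_2+4)^2$ and $(k_2+5)^2$, and they are all correct, as is your exclusion of case (ii) via $k_2=\tfrac13$. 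Your argument also supplies two points that the paper's proof passes over: the evenness of $b$ (your algebraic-integrality argument for the eigenvalue $bi/2$; the paper simply sets $b=2a$ without comment), and an actual derivation of the clique structure of $R_3$ asserted in the theorem (via the minimal polynomial $A_3^2=(2a-2)A_3+(2a-1)I$ and the component count). What the paper's polynomial formalism buys in exchange is uniformity: the identical $e_k$-manipulations are reused for the class-$6$ scheme in Theorem~\ref{thm:2}, where a trigonometric elimination in more unknowns would be considerably clumsier.
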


\begin{proof}
Suppose that the matrix $W$ is a complex Hadamard matrix.
Without loss of generality, we may assume $w_0=1$ in (\ref{eq:w0W}).
Since $W$ is a hermitian matrix, we have $w_1w_2=1$ and $w_3=\pm1$.
Since $W$ is not a real Hadamard matrix, we have
$w_1-w_2\not=0$.
By Lemma~\ref{lem:equiv}, $(w_i)_{i=1}^3$ is a common zero of the polynomials $e_k$ ($k=1,2,3$) defined in (\ref{eq:ek}).
Since
\[ e_1-e_2=bi(X_1-X_2)(r(X_1+X_2-2X_3)-2X_3+2), \]
we have
\begin{equation}\label{eq:1-1}
r(w_1+w_2-2w_3)-2w_3+2=0.
\end{equation}

First assume that $w_3=1$.
Then by (\ref{eq:1-1}) we have $r(w_1+w_2-2)=0$.
Hence $r=0$.
Therefore we have case (i) of Lemma~\ref{lem:song}.
Then 
\begin{equation}\label{eq:nk1}
(k_1,k_2)=\left(\frac{(s+1)b^2}{s},-(s+1)\right).
\end{equation}
After specializing $X_3=1$, we have
\[
e_3-e_1=\frac{1}{4}((s-bi)X_1+(s+bi)X_2-2s)((s+bi)X_1+(s-bi)X_2-2s).
\]
Hence $((s-bi)w_1+(s+bi)w_2-2s)((s+bi)w_1+(s-bi)w_2-2s)=0$.
Then by $w_2=w_1^{-1}$ we have
\begin{equation}\label{eq:1-2}
(w_1-1)^2((s-bi)w_1-(s+bi))((s+bi)w_1-(s-bi))=0.
\end{equation}
Put $w=(s+2bi)/(s-2bi)$.
By $w_1\not=1$ we have $w_1\in\{w,\overline{w}\}$ by (\ref{eq:1-2}).
We may assume $w_1=w$.
After specializing $X_1=w$, $X_2=\overline{w}$ and $X_3=1$, we have
\[
e_1=-(1+k_1+k_2)-\frac{4b^4s^2}{(b^2+s^2)^2}.
\]
Hence
\begin{equation}\label{eq:1-3}
k_1+k_2=-1-\frac{4b^4s^2}{(b^2+s^2)^2}.
\end{equation}
Then by (\ref{eq:nk1}), (\ref{eq:1-3}) we have
\[
(b-s)(b+s)((s+1)b^4-(s-2)s^2b^2+s^4)=0.
\]
Since $b>0$ and $s=-(k_2+1)<-1$, we have $b-s\not=0$.
First assume that $s=-b$.
Then, by putting $b=2a$, we have $(k_1,k_2)=(2a(2a-1),2a-1)$ and $w_1=-i$.
Therefore we have the first eigenmatrix (\ref{d3P1}).
Next assume that
\begin{equation}\label{eq:0921}
(s+1)b^4-(s-2)s^2b^2+s^4=0.
\end{equation}
Then the discriminant of (\ref{eq:0921}) as a quadratic equation in $b^2$ is $s^4((s-4)^2-16)$.
Since $b^2$ is rational, $(s-4)^2-16$ is a square. This implies $s\in\{0,-1\}$.
This contradicts $s=-(k_2+1)<-1$.

Next assume that $w_3=-1$.
Then by (\ref{eq:1-1}) we have $r(w_1+w_2+2)+4=0$.
Put $g(x)=rx^2+2(r+2)x+r$.
By $w_2=w_1^{-1}$ we have $g(w_1)=0$.
Since $w_1\not\in\R$, we have $r+1<0$.
Thus we have case (ii) of Lemma~\ref{lem:song}.
After specializing $X_1=w_1$, $X_2=w_1^{-1}$, $X_3=-1$ and $s=0$, we have
\[
 e_1-e_3=\frac{(w_1+1)f(w_1)}{4w_1^2},
\]
where $f(x)=((r+bi)x+r-bi)((r+bi)x^2+2(r+4)x+r-bi)$.
Since $w_1\not=-1$, we have $f(w_1)=0$.
Since
\begin{align*}
f(x)&=\frac{(r+bi)((r+bi)rx+r(r+4)-(3r+4)bi)}{r^2}g(x)\\
&\quad-\frac{4((r+1)b^2+r^2)}{r^2}((r+4)x-r)
\end{align*}
and $(r+4)w_1-r\not=0$,
we have
\begin{equation}\label{eq:0918}
(r+1)b^2+r^2=0.
\end{equation}
Substituting $(r,b^2)=(-k_2+1,(1+k_1+k_2)(k_2+1))$ in (\ref{eq:0918}),
we have $(k_2+1)(k_2^2+k_1k_2-1)=0$.
This is a contradiction.
\end{proof}

In the next section,
we will construct a nonsymmetric association scheme $\mathfrak{X}$ of class $6$
on a Galois ring of characteristic $4$, with the first eigenmatrix
\eqref{P6}.
An association scheme with the first eigenmatrix (\ref{d3P1}) can be obtained
by fusing some relations of $\mathfrak{X}$.

%%%%%%%%%%%%%%%%%%%%
\section{Association schemes on Galois rings} \label{sec.Galoisring}

For the reminder of this section, we let $e\geq 3$ be an odd positive integer.
We refer the reader to \cite{Y} for basic theory of Galois rings.
Let $F=\text{GF}(2)$ be the prime field of characteristic $2$ and
$K=\text{GF}(2^e)$ be an extension of degree $e$.
We identify $K$ with $F[x]/(\varphi(x))$, 
where $\varphi(x)$ is a primitive polynomial of degree $e$ over $F$,
and we denote by $\zeta$ a root of $\varphi(x)$ in $K$.
%Let $K^{\ast}$ be the cyclic group of order $2^e-1$ generated by $\zeta$.

Let $\A=\Z/4\Z$.
There exists a monic polynomial $\Phi(x)$ over $\A$ such that $\Phi(x)\equiv \varphi(x)$ mod $2\A[x]$
and $\Phi(x)$ divides $x^{2^e-1}-1$ in $\A[x]$ (see \cite[Theorem 1]{CS}).
The ring $\RR=\A[x]/(\Phi(x))$ is called a Galois ring,
and it is a local ring with maximal ideal $\PP=2\RR$ and 
residue field $\RR/\PP\cong K$.

Let $\xi$ be the image of $x$ in $\RR$, so that $\xi+\PP$ is mapped to $\zeta$
under the isomorphism $\RR/\PP\cong K$.
Then $\RR=\A[\xi]$ and $\xi$ has order $2^e-1$ in $\RR$.
Let $\T$ be the cyclic group of order $2^e-1$ generated by $\xi$.
Since $\T$ is mapped bijectively onto $K\setminus\{0\}$ under the natural homomorphism $\RR\rightarrow K$,
each element $\alpha\in \RR$ is uniquely expressed as
\[
\alpha=\alpha_0+2\alpha_1, \quad \alpha_0,\alpha_1\in\T\cup\{0\}.
\]
The unit group $\RR^{\ast}$ is the direct product of $\T$ 
and the principal unit group $\mathcal{E}=1+\PP$.
Let $\T_\delta=\{\xi^j\mid 0\leq j\leq 2^e-2,\;\mathrm{Tr}(\zeta^j)=\delta\}$ ($\delta=0,1$).
We set $\PP_0=2\T_0\cup\{0\}$ and $H=1+\PP_0$.
Thus $\PP_0$ and $H$ are subgroups of index $2$ in the additive group $\PP$ and 
the multiplicative group $\mathcal{E}$, respectively.
Setting $b=2^{e-1}$, we have $b=|\PP_0|=|H|$.

The mapping $\xi\mapsto\xi^2$ can be extended to a ring automorphism $f$ of $\RR$
which fixes $\A$ elementwise.
The ring automorphism $f$ is called the {\em Frobenius automorphism}.
For $\alpha=\alpha_0+2\alpha_1$ ($\alpha_0,\alpha_1\in\T\cup\{0\}$), we have
\[
\alpha^f=\alpha_0^2+2\alpha_1^2,
\]
and the trace of $\alpha\in\RR$ is defined by
\[
\cS(\alpha)=\alpha+\alpha^f+\cdots+\alpha^{f^{e-1}}.
\]
Note that $\cS$ is an $\A$-linear mapping from $\RR$ to $\A$.
The additive characters of $\RR$ are given by $\alpha\mapsto\chi(\alpha\beta)$ 
($\beta\in\RR$), where
\[
\chi(\alpha)=i^{e\cS(\alpha)},
\]
and $i^2=-1$. Define
\[
\lambda_{\alpha}(A)=\sum_{\beta\in A}\chi(\alpha\beta),
\]
for $\alpha\in\RR$ and $A\subset\RR$.
Set $\lambda=\lambda_1$.

Observe
\begin{align}
\mathcal{E}&=H\cup(-H), \nonumber \\
\alpha\T&=\PP\setminus\{0\} \quad \text{for any $\alpha\in\PP\setminus\{0\}$}, \label{eq:aT} \\
\mathrm{Tr}(\zeta^j)&\equiv \cS(\xi^j)\bmod{2}. \nonumber
\end{align}
Since $e$ is odd, we obtain
\begin{align}
\chi(\alpha)&=
\begin{cases}
1 &  \ \text{if} \ \alpha\in\PP_0, \\
-1 &  \ \text{if} \ \alpha\in\PP\setminus\PP_0,
\end{cases}\label{chival} \\
\chi(\alpha)&=i \quad \text{for} \ \alpha\in H. \label{chiH}
\end{align}

\begin{lem}  \label{lem:aP0}
For any $\alpha\in\RR^{\ast}\setminus\mathcal{E}$ we have the following.
\begin{align}
\lambda(\alpha\mathcal{P}_0)&=0, \label{eq:aP0} \\
\lambda(\alpha H)&=0. \label{eq:aH}
\end{align}
\end{lem}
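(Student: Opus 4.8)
The goal is to show that for $\alpha\in\RR^{\ast}\setminus\mathcal{E}$, the character sums $\lambda(\alpha\PP_0)$ and $\lambda(\alpha H)$ vanish. The plan is to exploit the factorization $\RR^{\ast}=\T\times\mathcal{E}$ together with the structure of the coset being summed. Since $\alpha\notin\mathcal{E}$, when I write $\alpha=\tau\epsilon$ with $\tau\in\T$ and $\epsilon\in\mathcal{E}$, the component $\tau$ is nontrivial, i.e.\ $\tau\neq 1$. The subgroups $\PP_0$ and $H$ are the index-2 subgroups of the additive group $\PP$ and the multiplicative group $\mathcal{E}$ respectively, and the key is that multiplying these by a unit outside $\mathcal{E}$ scatters the values of $\chi$ evenly over $\{1,-1\}$ (or over $\{\pm i\}$), forcing cancellation.

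**Proof of \eqref{eq:aP0}.** I would first treat $\lambda(\alpha\PP_0)$. Write $\alpha\PP_0=\{\alpha\beta\mid\beta\in\PP_0\}$. Because $\PP_0\subset\PP$ and $\PP=2\RR$ is an ideal, each $\alpha\beta$ lies in $\PP$, so by \eqref{chival} the summand $\chi(\alpha\beta)$ is $+1$ when $\alpha\beta\in\PP_0$ and $-1$ when $\alpha\beta\in\PP\setminus\PP_0$. Thus $\lambda(\alpha\PP_0)$ equals the number of $\beta\in\PP_0$ with $\alpha\beta\in\PP_0$ minus the number with $\alpha\beta\in\PP\setminus\PP_0$. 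I would argue that since $\alpha$ is a unit, the map $\beta\mapsto\alpha\beta$ is a bijection on $\PP$, and that because $\alpha\notin\mathcal{E}$ this bijection does \emph{not} preserve the subgroup $\PP_0$; more precisely, I expect that $\alpha\PP_0$ meets $\PP_0$ and $\PP\setminus\PP_0$ in equal numbers $b/2$ each. The cleanest route is to reduce modulo $\PP$: since $\PP_0/\{0\}$ corresponds under $\RR\to K$ to the trace-zero hyperplane $\{\zeta^j:\mathrm{Tr}(\zeta^j)=0\}$, and multiplication by the image $\bar\alpha=\zeta^m$ (with $m\not\equiv 0$, as $\alpha\notin\mathcal{E}$) permutes $K$ linearly, the sum $\sum_{\beta\in\PP_0}\chi(\alpha\beta)$ becomes (up to the $\alpha\leftrightarrow\alpha_0$ identification via \eqref{eq:aT}) a standard additive-character sum over a hyperplane of $K$ twisted by a nontrivial shift, which vanishes by orthogonality of characters on the additive group of $K$.

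**Proof of \eqref{eq:aH}.** For $\lambda(\alpha H)$ I would proceed analogously but now use \eqref{chiH}, which says $\chi$ is identically $i$ on $H$ itself. Here I expect the decomposition $\mathcal{E}=H\cup(-H)$ together with $\alpha\notin\mathcal{E}$ to be decisive: I would compute $\lambda(\alpha H)=\sum_{\beta\in H}\chi(\alpha\beta)$ and pair it against the translate structure, showing that the multiset $\{\alpha\beta:\beta\in H\}$ is distributed so that the fourth-root-of-unity values $\chi(\alpha\beta)$ sum to zero. A convenient tactic is to write each $\beta\in H$ as $1+2\gamma$ with $\gamma\in\T_0\cup\{0\}$, expand $\chi(\alpha(1+2\gamma))=\chi(\alpha)\chi(2\alpha\gamma)$, and reduce the sum over $\gamma$ to a sum of the type already controlled in \eqref{eq:aP0}, since $2\alpha\gamma\in\PP$ and the relevant $\gamma$-set again corresponds to a trace condition in $K$.

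**Anticipated main obstacle.** The principal difficulty is bookkeeping the reduction to $K$ cleanly: the subgroups $\PP_0$ and $H$ are defined through the trace form $\mathrm{Tr}$ on $K$ via the correspondence $\mathrm{Tr}(\zeta^j)\equiv\cS(\xi^j)\bmod 2$, and I must carefully track how multiplication by $\alpha$ (equivalently by its residue $\bar\alpha\in K^{\ast}$) interacts with the trace-zero hyperplane so that the resulting character sum genuinely becomes a nontrivial one that orthogonality kills. The fact that $\alpha\in\RR^{\ast}\setminus\mathcal{E}$ forces $\bar\alpha\neq 1$ (so the twist is nontrivial) is exactly what is needed, and the $e$ odd hypothesis, entering through \eqref{chival}--\eqref{chiH}, guarantees the clean two-valued behavior of $\chi$ on $\PP$ that makes the counting argument go through.
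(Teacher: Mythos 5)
Your proposal is correct and follows essentially the same route as the paper: for \eqref{eq:aP0} both arguments pass to $K$ via the additive isomorphism $\PP\cong K$ sending $\PP_0$ to $\mathrm{Tr}^{-1}(0)$, and use non-degeneracy of the trace together with $\overline{\alpha}\neq 1$ to conclude that multiplication by $\alpha$ moves $\PP_0$, so that $\chi$ takes the values $+1$ and $-1$ equally often on $\alpha\PP_0$ (your orthogonality phrasing and the paper's index-$4$ counting via $\alpha\PP_0\cap\PP_0$ are the same cancellation). For \eqref{eq:aH}, your expansion $\chi(\alpha(1+2\gamma))=\chi(\alpha)\chi(2\alpha\gamma)$ is exactly the paper's one-line derivation $\lambda(\alpha H)=\chi(\alpha)\lambda(\alpha\PP_0)=0$ from $\alpha H=\alpha+\alpha\PP_0$.
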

\begin{proof}
Note that the correspondence $2\xi^j\mapsto \zeta^j$ extends to an
isomorphism from $\PP$ to $K$ as additive groups. Under this isomorphism,
$\PP_0$ is mapped to $\mathrm{Tr}^{-1}(0)$. Since
$\alpha\notin\mathcal{E}$, the image $a=\alpha+\PP$ of $\alpha$ in $K=\RR/\PP$
is not $1$. By the non-degeneracy of the trace function on $K$, we have
$a\mathrm{Tr}^{-1}(0)\neq\mathrm{Tr}^{-1}(0)$. This implies 
$\alpha\mathcal{P}_0\neq\mathcal{P}_0$.
Then
\[A=\alpha\PP_0\cap\PP_0\]
is a subgroup of index $4$ in the additive group $\PP$.
By \eqref{chival} we have
\[
\chi(\beta)=
\begin{cases}
    1 & \text{if} \quad \beta\in A, \\
    -1 & \text{if} \quad \beta\in\alpha\PP_0\setminus A.
\end{cases}
\]
Therefore $\lambda(\alpha\PP_0)=0$.
Since $\alpha H=\alpha+\alpha\PP_0$, we have
$\lambda(\alpha H)=\lambda(\alpha)\lambda(\alpha\PP_0) 
=0$.
\end{proof}

Recall $b=|\PP_0|=|H|$.

\begin{lem} \label{lem:lamH}
We have the following.
\begin{itemize}
\item[\rm{(i)}] $\lambda(H)=bi$,
\item[\rm{(ii)}] $\lambda(\T H)=bi$.
\end{itemize}
\end{lem}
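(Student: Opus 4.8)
The plan is to treat the two parts in sequence, deriving (ii) from (i) together with Lemma~\ref{lem:aP0}. For part (i), I would simply invoke \eqref{chiH}: every element of $H$ has character value $\chi(\beta)=i$, so that
\[
\lambda(H)=\sum_{\beta\in H}\chi(\beta)=|H|\,i=bi,
\]
using $b=|H|$. This is the entire content of (i); no further computation is needed once \eqref{chiH} is in hand.

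For part (ii), the key structural observation is that $\RR^{\ast}=\T\times\mathcal{E}$ is a direct product and $H\subseteq\mathcal{E}$. Hence, if $t,t'\in\T$ and $h,h'\in H$ satisfy $th=t'h'$, uniqueness of the decomposition forces $t=t'$ and $h=h'$. Thus $\T H$ is the disjoint union $\bigsqcup_{t\in\T}tH$, and I would expand
\[
\lambda(\T H)=\sum_{t\in\T}\lambda(tH).
\]

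It then remains to evaluate the individual terms $\lambda(tH)$. The term $t=1$ contributes $\lambda(H)=bi$ by part (i). For $t\in\T$ with $t\neq1$, the direct product decomposition gives $t\notin\mathcal{E}$, so $t\in\RR^{\ast}\setminus\mathcal{E}$, and \eqref{eq:aH} of Lemma~\ref{lem:aP0} yields $\lambda(tH)=0$. Summing, all but the $t=1$ term vanish and we obtain $\lambda(\T H)=bi$, as claimed.

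The only point requiring care---and the closest thing to an obstacle---is the disjointness of the union $\T H=\bigsqcup_{t\in\T}tH$ and, equivalently, the assertion that $t\neq1$ in $\T$ implies $t\in\RR^{\ast}\setminus\mathcal{E}$; both follow at once from the fact that $\T$ and $\mathcal{E}$ meet only in $1$ inside the direct product $\RR^{\ast}=\T\times\mathcal{E}$. Everything else is a direct application of the already-established relations \eqref{chiH} and \eqref{eq:aH}.
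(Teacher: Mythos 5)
Your proof is correct and follows essentially the same route as the paper: part (i) is immediate from \eqref{chiH}, and part (ii) splits $\T H$ into $H$ and $(\T\setminus\{1\})H$, killing the latter via \eqref{eq:aH} since $\T\setminus\{1\}\subset\RR^{\ast}\setminus\mathcal{E}$. Your explicit justification of the disjointness of $\bigcup_{t\in\T}tH$ via the direct product $\RR^{\ast}=\T\times\mathcal{E}$ is a point the paper leaves implicit, but it is a refinement of the same argument rather than a different one.
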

\begin{proof}
(i) Obvious from \eqref{chiH}.

(ii) Since $\T\setminus\{1\}\subset\RR^{\ast}\setminus\mathcal{E}$,
we have
$\lambda((\T\setminus\{1\})H)=0$
by \eqref{eq:aH}. Then the result follows from  (i).
\end{proof}

\begin{lem} \label{lem:lamaH}
We have
\[
\lambda_{\alpha}(H)=
\begin{cases}
b & \text{if} \quad \alpha\in\PP_0, \\
-b & \text{if} \quad \alpha\in\PP\setminus\PP_0.
\end{cases}
\]
\end{lem}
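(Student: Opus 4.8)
The plan is to exploit that $H$ is the coset $1+\PP_0$ together with the fact that the maximal ideal $\PP=2\RR$ squares to zero, so that multiplication by any $\alpha\in\PP$ collapses every element of $H$ to the single value $\alpha$. This is the whole point: the character $\chi$ is constant on the products $\alpha\beta$ as $\beta$ ranges over $H$.

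First I would write an arbitrary element of $H$ as $\beta=1+\delta$ with $\delta\in\PP_0\subseteq\PP$. Since $\alpha\in\PP$ as well, the product $\alpha\delta$ lies in $\PP^2=(2\RR)^2=4\RR$. Because $\RR$ is an algebra over $\A=\Z/4\Z$, we have $4\RR=0$, so $\PP^2=0$ and hence $\alpha\delta=0$. Consequently $\alpha\beta=\alpha(1+\delta)=\alpha$ for every $\beta\in H$, and the defining sum therefore has $|H|$ equal terms:
\[
\lambda_{\alpha}(H)=\sum_{\beta\in H}\chi(\alpha\beta)=\sum_{\beta\in H}\chi(\alpha)=|H|\,\chi(\alpha)=b\,\chi(\alpha),
\]
using $b=|H|$.

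It then remains only to evaluate $\chi(\alpha)$ for $\alpha\in\PP$, which is supplied directly by \eqref{chival}: one has $\chi(\alpha)=1$ when $\alpha\in\PP_0$ and $\chi(\alpha)=-1$ when $\alpha\in\PP\setminus\PP_0$. Substituting these into $b\,\chi(\alpha)$ yields the two cases in the statement.

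There is no serious obstacle here; the entire argument turns on the single algebraic observation $\PP^2=0$, which is precisely the feature distinguishing a Galois ring of characteristic $4$ from a split situation. Once one notices that the exponents $\alpha\beta$ are all equal to $\alpha$, the conclusion is immediate from the values of $\chi$ already recorded in \eqref{chival}, and no further computation with the trace form $\cS$ is needed.
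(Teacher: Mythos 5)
Your proof is correct and is essentially the paper's own argument: the paper's proof observes that $\alpha H=\{\alpha\}$ for $\alpha\in\PP$ (which is exactly your explicit computation $\alpha(1+\delta)=\alpha$ via $\PP^2=4\RR=0$), concludes $\lambda_{\alpha}(H)=|H|\chi(\alpha)$, and then invokes \eqref{chival}. You have merely spelled out the one-line step the paper leaves implicit.
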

\begin{proof}
Let $\alpha\in\mathcal{P}$. Then $\alpha H=\{\alpha\}$.
Hence $\lambda_{\alpha}(H)=|H|\chi(\alpha)$, and 
the result follows from \eqref{chival}.
\end{proof}

\begin{lem}  \label{lem:lamaTH}
For any $\alpha\in\PP\setminus\{0\}$ we have
\[ \lambda_{\alpha}(\T H)=-b. \]
\end{lem}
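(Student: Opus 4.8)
The plan is to decompose $\T H$ into cosets of $H$ and reduce the computation to the values of $\lambda_\beta(H)$ already determined in Lemma~\ref{lem:lamaH}. First I would observe that, since $\RR^{\ast}=\T\times\mathcal{E}$ is a direct product and $H\subseteq\mathcal{E}$, we have $\T\cap H=\{1\}$, so $\T H=\bigsqcup_{\tau\in\T}\tau H$ is a disjoint union of $|\T|=2^e-1$ cosets, each of size $b$. Summing the additive character coset-by-coset then gives
\[
\lambda_{\alpha}(\T H)=\sum_{\tau\in\T}\sum_{h\in H}\chi(\alpha\tau h)
=\sum_{\tau\in\T}\lambda_{\alpha\tau}(H).
\]

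Next I would re-index this sum using \eqref{eq:aT}. Since $\alpha\in\PP\setminus\{0\}$, that relation asserts $\alpha\T=\PP\setminus\{0\}$; as $|\T|=|\PP\setminus\{0\}|=2^e-1$, the map $\tau\mapsto\alpha\tau$ is in fact a bijection from $\T$ onto $\PP\setminus\{0\}$. Consequently
\[
\lambda_{\alpha}(\T H)=\sum_{\beta\in\PP\setminus\{0\}}\lambda_{\beta}(H).
\]
Finally I would substitute the values from Lemma~\ref{lem:lamaH}, namely $\lambda_{\beta}(H)=b$ for $\beta\in\PP_0$ and $\lambda_{\beta}(H)=-b$ for $\beta\in\PP\setminus\PP_0$. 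Counting with $|\PP_0|=b$ and $|\PP|=2^e=2b$ yields $|\PP_0\setminus\{0\}|=b-1$ and $|\PP\setminus\PP_0|=b$, whence
\[
\lambda_{\alpha}(\T H)=(b-1)\cdot b+b\cdot(-b)=b^2-b-b^2=-b.
\]

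The one point requiring care is the bijectivity of $\tau\mapsto\alpha\tau$: because $\alpha\in\PP$ is a zero divisor, multiplication by $\alpha$ is not injective on all of $\RR$, so I would justify the re-indexing by the cardinality argument afforded by \eqref{eq:aT} rather than by injectivity on $\RR$. Beyond this, the argument is purely a bookkeeping of coset sizes against the two-valued character sum, so I anticipate no serious obstacle.
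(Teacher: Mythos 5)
Your proof is correct and follows essentially the same route as the paper: both use \eqref{eq:aT} to rewrite $\lambda_{\alpha}(\T H)$ as $\sum_{\beta\in\PP\setminus\{0\}}\lambda_{\beta}(H)$ and then apply Lemma~\ref{lem:lamaH}. The only immaterial difference is that the paper evaluates this sum by observing $\sum_{\beta\in\PP}\lambda_{\beta}(H)=0$ and subtracting $\lambda_0(H)=b$, whereas you count the $(b-1)$ terms equal to $b$ and the $b$ terms equal to $-b$ directly; your explicit justification of the disjoint coset decomposition and of the bijectivity of $\tau\mapsto\alpha\tau$ fills in details the paper leaves implicit.
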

\begin{proof}
By \eqref{eq:aT} we have
\begin{align*}
\lambda_{\alpha}(\T H)&=\sum_{\beta\in\PP\setminus\{0\}}\lambda_{\beta}(H) \\
&=\sum_{\beta\in\PP}\lambda_{\beta}(H)-\lambda_0(H) \\
&=-\lambda_0(H) && (\text{by Lemma~\ref{lem:lamaH}}) \\
&=-b.
\end{align*}
\end{proof}

Then our second main theorem is the following.
\begin{thm} \label{thm:1}
Let
\begin{align*}
S_0&=\{0\}, \\
S_1&=(\T\setminus\{1\})H, \\
S_2&=(\T\setminus\{1\})(-H), \\
S_3&=H, \\
S_4&=-H, \\
S_5&=\PP_0\setminus\{0\}, \\
S_6&=\PP\setminus\PP_0.
\end{align*}
Then $\lambda_{\alpha}(S_j)$ is constant for any $\alpha\in S_i$ {\rm($i=0,1,\ldots,6$)}.
Put $p_{i,j}=\lambda_{\alpha}(S_j)$ for $\alpha\in S_i$ {\rm($i,j=0,1,\ldots,6$)}.
Then $P=(p_{i,j})_{\substack{0\leq i\leq 6 \\ 0\leq j\leq 6}}$
 is given by {\rm(\ref{P6})}.
In particular, if we set
\[
R_j=\{(\alpha,\beta)\in\RR\times\RR \mid \alpha-\beta\in S_j \} \quad (j=0,1,\ldots,6),
\]
then $\mathfrak{X}=(\RR,\{R_i\}_{i=0}^6)$ is a nonsymmetric association scheme of class $6$
with the first eigenmatrix \eqref{P6}.
\end{thm}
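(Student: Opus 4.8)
The plan is to realize $\mathfrak{X}$ as the translation (Cayley) scheme on the additive group $(\RR,+)$ attached to the partition $\{S_i\}_{i=0}^{6}$, so that the whole statement splits into two tasks: showing that the character sums $\lambda_\alpha(S_j)$ depend only on the class $S_i$ containing $\alpha$ and equal the entries of \eqref{P6}; and showing that this very constancy is what turns the Cayley relations into an association scheme. I would first record that $\{S_i\}_{i=0}^{6}$ is a partition of $\RR$: one has $S_3\cup S_4=\mathcal{E}$ and $S_1\cup S_2=(\T\setminus\{1\})\mathcal{E}$, so $S_1\cup\cdots\cup S_4=\RR^{\ast}$, while $S_5\cup S_6=\PP\setminus\{0\}$ and $S_0=\{0\}$, whence the union is $\RR=\PP\cup\RR^{\ast}$; disjointness follows from the direct product structure $\RR^{\ast}=\T\times\mathcal{E}$ together with the disjoint union $\mathcal{E}=H\cup(-H)$. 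I would also note the negation behaviour $-S_0=S_0$, $-S_1=S_2$, $-S_3=S_4$, $-S_5=S_5$, $-S_6=S_6$, which will give closure under transposition and, since $S_1\neq S_2$, nonsymmetry.

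Next I would reduce each $\lambda_\alpha(S_j)$ to a handful of building blocks. Using $S_1=\T H\setminus H$, $S_2=-S_1$, $S_5=\PP_0\setminus\{0\}$, $S_6=\PP\setminus\PP_0$, together with $\lambda_\alpha(-A)=\overline{\lambda_\alpha(A)}$, every entry becomes a combination of $\lambda_\alpha(H)$, $\lambda_\alpha(\T H)$, $\lambda_\alpha(\PP_0)$ and $\lambda_\alpha(\PP)$, which I would compute in two regimes. For $\alpha\in\PP$ (the rows $i=0,5,6$) the identity $\PP^2=0$, valid since $\PP=2\RR$ and $\RR$ has characteristic $4$, gives $\lambda_\alpha(\PP_0)=b$ and $\lambda_\alpha(\PP)=2b$ at once, while $\lambda_\alpha(H)$ and $\lambda_\alpha(\T H)$ are supplied directly by Lemma~\ref{lem:lamaH} and Lemma~\ref{lem:lamaTH}. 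For $\alpha\in\RR^{\ast}$ (the rows $i=1,2,3,4$) I would use $\lambda_\alpha(\PP)=\lambda(\PP)=0$, the dichotomy $\lambda_\alpha(\PP_0)=b$ for $\alpha\in\mathcal{E}$ (again from $\PP^2=0$) and $\lambda_\alpha(\PP_0)=0$ for $\alpha\in\RR^{\ast}\setminus\mathcal{E}$ (this is \eqref{eq:aP0}), and the factorization $\lambda_\alpha(H)=\chi(\alpha)\lambda_\alpha(\PP_0)$ coming from $H=1+\PP_0$, which with \eqref{chiH} yields $\lambda_\alpha(H)\in\{bi,-bi,0\}$ according as $\alpha\in H$, $\alpha\in-H$, or $\alpha\in\RR^{\ast}\setminus\mathcal{E}$.

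The delicate block is $\lambda_\alpha(\T H)=\sum_{t\in\T}\lambda_{\alpha t}(H)$ for a unit $\alpha$, and I expect this to be the main obstacle. One observes that as $t$ runs through $\T$, exactly one value $t_0$ (the Teichm\"uller representative of $\bar\alpha^{-1}\in K^{\ast}$, where $\bar\alpha$ is the image of $\alpha$ in $K=\RR/\PP$) makes $\alpha t_0\in\mathcal{E}$, so that all other summands vanish by the previous step, and one must then fix the sign of the surviving $\pm bi$ by deciding whether $\alpha t_0$ lies in $H$ or in $-H$. Writing a unit of $S_1$ as $t_1 h$ and one of $S_2$ as $-t_1 h$, and using that $\overline{-1}=1$ in the characteristic-$2$ residue field $K$, one gets $\alpha t_0=h\in H$ in the first case and $\alpha t_0=-h\in-H$ in the second, so $\lambda_\alpha(\T H)=bi$ on $S_1\cup S_3$ and $-bi$ on $S_2\cup S_4$. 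Substituting all blocks back shows that $\lambda_\alpha(S_j)$ depends only on $i$ and reproduces exactly the rows of \eqref{P6}, settling the constancy claim and the evaluation of $P$ in one stroke.

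Finally I would deduce the scheme structure. The matrices $A_0,\dots,A_6$ all lie in the commutative algebra of $\RR\times\RR$ matrices simultaneously diagonalized by the additive characters $v^{(\gamma)}=(\chi(\gamma\alpha))_{\alpha}$, and the eigenvalue of $A_j$ on $v^{(\gamma)}$ is $\lambda_{-\gamma}(S_j)$, which by the foregoing depends only on the class of $\gamma$. Grouping the characters accordingly produces orthogonal idempotents $E_0,\dots,E_6$, projections onto the common eigenspaces, indexed (by the negation $\gamma\mapsto-\gamma$) so that $A_j=\sum_{i}p_{i,j}E_i$ as in \eqref{eq:pij}. Since the $A_j$ have pairwise disjoint supports they are linearly independent, and being $7$ independent vectors in the $7$-dimensional span of the independent idempotents they force $P$ to be invertible and to span the same algebra. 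That algebra contains $I=A_0$ and $J=\sum_j A_j$, is closed under the Hadamard product (the relations are disjoint) and under transposition (the negation pairing), and is closed under ordinary multiplication because $E_iE_j=\delta_{ij}E_i$; these are precisely the Bose--Mesner axioms. Hence $\mathfrak{X}=(\RR,\{R_i\}_{i=0}^{6})$ is a commutative association scheme of class $6$, nonsymmetric because $R_1^{\top}=R_2\neq R_1$, with first eigenmatrix $P$ given by \eqref{P6}.
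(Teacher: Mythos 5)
Your proof is correct, and its computational core is essentially the paper's: both reduce every entry of \eqref{P6} to character sums over $H$, $\T H$, $\PP_0$ and $\PP$, evaluated by the same case analysis ($\alpha\in\PP$ versus $\alpha$ a unit inside or outside $\mathcal{E}$) using \eqref{eq:aP0}, \eqref{eq:aH}, Lemma~\ref{lem:lamaH} and Lemma~\ref{lem:lamaTH}. The organizational differences are minor: where you evaluate $\lambda_\alpha(\T H)$ for a unit $\alpha$ by isolating the unique $t_0\in\T$ with $\alpha t_0\in\mathcal{E}$ and killing all other cosets, the paper gets the same values from the group invariance $\alpha\T H=\T H$ for $\alpha\in\T H$ together with Lemma~\ref{lem:lamH}(ii); and where you compute column $6$ directly as $\lambda_\alpha(\PP)-\lambda_\alpha(\PP_0)$, the paper uses $\sum_{j=0}^6\lambda_\alpha(S_j)=0$. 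The one genuine addition in your write-up is the final paragraph: the paper stops once the character sums are computed, leaving the ``in particular'' assertion (that the $R_j$ really form a commutative association scheme) to the standard theory of translation schemes, whereas you spell out the argument --- simultaneous diagonalization of the $A_j$ by the additive characters, the orthogonal idempotents attached to the classes, and the dimension count forcing $\mathrm{span}\{A_j\}=\mathrm{span}\{E_i\}$ to be an algebra closed under both products and transposition. That is exactly the justification the paper omits, and it is worth making explicit; it also yields invertibility of $P$ for free. One small slip to repair: Lemma~\ref{lem:lamaTH} is stated only for $\alpha\in\PP\setminus\{0\}$, so it cannot be cited for the row $i=0$ as you do (blindly inserting $\lambda_0(\T H)=-b$ would give the wrong degrees); for $\alpha=0$ you should simply use $\lambda_0(S_j)=|S_j|$, which your building blocks reproduce since $\lambda_0(\T H)=|\T H|=(2b-1)b$. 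This is trivially fixable and does not affect the rest of the argument.
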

\begin{proof}
It is easy to check that $|S_1|=|S_2|=2b(b-1)$, $|S_3|=|S_4|=b$, $|S_5|=b-1$, and $|S_6|=b$.
So we have $p_{0,j}$ ($j=0,1,\ldots,6$) as in \eqref{P6}.
It is trivial that $\lambda_{\alpha}(S_0)=1$ for any $\alpha\in S_0\cup S_1\cup \cdots\cup S_6$.
Hence $p_{j,0}=1$ for $j=0,1,\ldots,6$ as in \eqref{P6}.

We claim that
\begin{align}
\lambda_{\alpha}(S_1)&=\begin{cases}
bi & \text{if} \quad \alpha\in S_1,\\
0 & \text{if} \quad \alpha\in S_3,\\
-2b & \text{if} \quad \alpha\in S_5,\\
0 & \text{if} \quad \alpha\in S_6,
\end{cases} \label{case:1} 
\displaybreak[0]\\
\lambda_{\alpha}(S_3)&=\begin{cases}
0 & \text{if} \quad \alpha\in S_1,\\
bi & \text{if} \quad \alpha\in S_3,\\
b & \text{if} \quad \alpha\in S_5,\\
-b & \text{if} \quad \alpha\in S_6,
\end{cases} \label{case:2} 
\displaybreak[0]\\
\lambda_{\alpha}(S_5)&=\begin{cases}
-1 & \text{if} \quad \alpha\in S_1,\\
b-1 & \text{if} \quad \alpha\in S_3,\\
b-1 & \text{if} \quad \alpha\in S_5,\\
b-1 & \text{if} \quad \alpha\in S_6.
\end{cases} \label{case:3}
\end{align}

First we prove \eqref{case:1}.
Assume that $\alpha\in S_1$.
Then $\lambda_{\alpha}(\T H)=\lambda(\T H)=bi$ by Lemma~\ref{lem:lamH} (ii),
and $\lambda_{\alpha}(H)=\lambda(\alpha H)$.
Since $S_1\subset\RR^{\ast}\setminus\mathcal{E}$, we have $\lambda(\alpha H)=0$ by 
\eqref{eq:aH}.
Hence $\lambda_{\alpha}(S_1)=bi$.
Next assume that $\alpha\in S_3$.
Similarly, we have $\lambda_{\alpha}(\T H)=\lambda(\T H)$ and 
$\lambda_{\alpha}(H)=\lambda(H)$.
By Lemma~\ref{lem:lamH} (i), (ii) we have $\lambda_{\alpha}(S_1)=0$.
Next assume that $\alpha\in S_5\cup S_6$.
Then by Lemma~\ref{lem:lamaH} and Lemma~\ref{lem:lamaTH},
we have $\lambda_{\alpha}(S_1)=-2b$ or $0$ depending on 
$\alpha\in S_5$ or $\alpha\in S_6$.
This completes the proof of \eqref{case:1}.

Next we prove \eqref{case:2}.
Assume that $\alpha\in S_1$.
Then $\lambda_{\alpha}(S_3)=\lambda(\alpha S_3)$.
Since $S_1\subset\RR^{\ast}\setminus\mathcal{E}$, we have $\lambda(\alpha S_3)=0$ by 
\eqref{eq:aH}.
Next assume that $\alpha\in S_3$.
Then $\lambda_{\alpha}(S_3)=\lambda(S_3)=bi$ by Lemma~\ref{lem:lamH} (i).
Next assume that $\alpha\in S_5\cup S_6$.
Then by Lemma~\ref{lem:lamaH}, $\lambda_{\alpha}(S_3)=b$ or $-b$ depending on 
$\alpha\in S_5$ or $\alpha\in S_6$.
This completes the proof of \eqref{case:2}.

Finally we prove \eqref{case:3}.
Assume that $\alpha\in S_1$.
Since $S_1\subset\RR^{\ast}\setminus\mathcal{E}$, 
we have $\lambda_{\alpha}(S_5)=\lambda(\alpha(\PP_0\setminus\{0\}))=-1$ by \eqref{eq:aP0}.
Next assume that $\alpha\in S_3$.
Since $\alpha\mathcal{P}_0=\PP_0$, we have $\lambda_{\alpha}(S_5)
=\lambda(\PP_0\setminus\{0\})=|S_5|=b-1$.
Next assume that $\alpha\in S_5\cup S_6$.
Then $\alpha(\PP_0\setminus\{0\})=\{0\}$.
Hence $\lambda_{\alpha}(S_5)=|S_5|=b-1$.
This completes the proof of \eqref{case:3}.

From \eqref{case:1}--\eqref{case:3}
we have $p_{1,1}$, $p_{3,1}$, $p_{5,1}$, $p_{6,1}$, 
$p_{1,3}$, $p_{3,3}$, $p_{5,3}$, $p_{6,3}$, 
$p_{1,5}$, $p_{3,5}$, $p_{5,5}$, and $p_{6,5}$ as in \eqref{P6}.
Since $\{S_j\}_{j=0}^6$ is a partition of $\RR$,
we have $\sum_{j=0}^6\lambda_{\alpha}(S_j)=0$ for $\alpha\not=0$.
Thus, it is enough to check that 
$\lambda_{\alpha}(S_j)$ is a constant independent of $\alpha\in S_i$ for $i=1,\ldots,6$ and $j=1,\ldots,5$.
Since $S_2=-S_1$, we have $p_{2,j}=\overline{p_{1,j}}$ and $p_{j,2}=\overline{p_{j,1}}$ for $j=1,\ldots,6$.
Since $S_4=-S_3$, we have $p_{4,j}=\overline{p_{3,j}}$ and $p_{j,4}=\overline{p_{j,3}}$ for $j=1,\ldots,6$.
Therefore we find all $p_{i,j}$ as in \eqref{P6}
from \eqref{case:1}--\eqref{case:3}.
\end{proof}

Let $\mathfrak{X}=(X,\{R_i\}_{i=0}^d)$ be a commutative association scheme.
A partition $\Lambda_0, \Lambda_1, \ldots, \Lambda_e$ of the index set $\{0,1,\ldots,d\}$
of the association scheme $\mathfrak{X}$
is said to be {\it admissible} if $\Lambda_0=\{0\}$, $\Lambda_i\not=\emptyset$ 
$(1\leq i\leq e)$ and 
$\Lambda_i'=\Lambda_j$ for some $j$ ($1\leq j\leq e$),
where $\Lambda_i'=\{\alpha' \mid \alpha\in\Lambda_i\}$, $R_{\alpha'}=\{(x,y)\mid (y,x)\in R_{\alpha}\}$.
Let $R_{\Lambda_i}=\bigcup_{\alpha\in\Lambda_i}R_{\alpha}$.
If $\mathfrak{Y}=(X,\{R_{\Lambda_i}\}_{i=0}^e)$ becomes an association scheme, 
then it is called a {\it fusion} scheme of $\mathfrak{X}$.

\medskip\par\noindent
{\bf Bannai--Muzychuk criterion} (\cite{B, Muzychuk}).
Let $\mathfrak{X}$ be a commutative association scheme 
with the first eigenmatrix $P$.
Let $\{\Lambda_j\}_{j=0}^e$ be an admissible partition of the index set $\{0,1,\ldots,d\}$.
Then $\{\Lambda_j\}_{j=0}^e$ gives rise to a fusion scheme $\mathfrak{Y}$ 
if and only if there exists a partition $\{\Delta_i\}_{i=0}^e$ of $\{0,1,\ldots,d\}$
with $\Delta_0=\{0\}$ such that 
each $(\Delta_i,\Lambda_j)$ block of the first eigenmatrix $P$ has a constant row sum.
Moreover, the constant row sum of the $(\Delta_i,\Lambda_j)$ block 
is the $(i,j)$ entry of the first eigenmatrix of $\mathfrak{Y}$.

\medskip

Let $\mathfrak{X}$ be an association scheme given in Theorem~\ref{thm:1}.
Fusion schemes of $\mathfrak{X}$ with at least three classes % in Theorem~{\rm\ref{thm:1}} 
are listed in Table~\ref{table}.
\begin{table}[htb]
 \begin{center}
  \begin{tabular}{|c|c|c|c|}
  \hline
  &fused relations & class & nonsymmetirc or symmetric \\
  \hline\hline
  $\mathfrak{X}_1$&$\{1,2\}$ & $5$ & nonsymmetric \\
  \hline
  $\mathfrak{X}_2$&$\{3,4\}$ & $5$ & nonsymmetric \\
  \hline
  $\mathfrak{X}_3$&$\{1,2\},\{3,4\}$ & $4$ & symmetric \\
  \hline
  $\mathfrak{X}_4$&$\{3,4,6\}$ & $4$ & nonsymmetric \\
  \hline
  $\mathfrak{X}_5$&$\{1,2\},\{3,4\},\{5,6\}$ & $3$ & symmetric \\
  \hline
  $\mathfrak{X}_6$&$\{1,2,3,4\}$ & $3$ & symmetric \\
  \hline
  $\mathfrak{X}_7$&$\{1,3\},\{2,4\},\{5,6\}$ & $3$ & nonsymmetric \\
  \hline
  $\mathfrak{X}_8$&$\{1,4\},\{2,3\},\{5,6\}$ & $3$ & nonsymmetric \\
  \hline
  \end{tabular}
  \caption{Fusion schemes of $\mathfrak{X}$}
  \label{table}
 \end{center}
\end{table}
The first eigenmatrix of $\mathfrak{X}_7$ and $\mathfrak{X}_8$ in Table~\ref{table}
are the matrix (\ref{d3P1}) by putting $a=b$. Put $b=4^p$.
We can verify that $\mathfrak{X}_7$ and $\mathfrak{X}_8$ for $p=3$ are isomorphic,
and $\mathfrak{X}_7$ and $\mathfrak{X}_8$ for $p=5$ are not isomorphic.
The computation needed to verify these facts was done with the
help of Magma \cite{magma}.

%%%%%%%%%%%%%%%%%%%%
\section{Complex Hadamard matrices and Galois rings}\label{sec:5}

We continue to use the notation introduced in Section~\ref{sec.Galoisring}.
Let $\{A_j\}_{j=0}^6$ be the set of the adjacency matrices of $\mathfrak{X}$.
Then $A_1^T=A_2$, $A_3^T=A_4$, and $A_5,A_6$ symmetric.
We call the algebra $\mathfrak{A}=\langle A_0,A_1,\ldots,A_6\rangle$ 
the Bose--Mesner algebra of $\mathfrak{X}$.
Our next theorem gives a classification of 
hermitian complex Hadamard matrices belonging to $\mathfrak{A}$.

\begin{thm}\label{thm:2}
Let $w_0=1$ and $w_j$ {\rm(}$1\leq j\leq6${\rm)} be complex numbers of absolute value $1$.
Set
\[ W=\sum_{j=0}^6w_jA_j\in\mathfrak{A}, \]
and assume that $W$ is hermitian.
Then, $W$ is a complex Hadamard matrix if and only if 
\begin{align}
W=&A_0+\epsilon_1i(A_1-A_2)+\epsilon_2i(A_3-A_4)+A_5+A_6, \quad \text{or} \label{eq:W1}\\
W=&A_0+\epsilon_1i(A_1-A_2)+\epsilon_2(A_3+A_4)+A_5-A_6,\label{eq:W2}
\end{align}
for some $\epsilon_1,\epsilon_2\in\{\pm1\}$.
\end{thm}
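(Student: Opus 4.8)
The plan is to run everything through Lemma~\ref{lem:equiv}, using the explicit eigenmatrix \eqref{P6} obtained in Theorem~\ref{thm:1}. Since $X=\RR$ has $n=4^e=4b^2$ elements, part (ii) of Lemma~\ref{lem:equiv} says that the hermitian matrix $W$ is a complex Hadamard matrix if and only if $\gamma_k=\pm 2b$ for $k=1,\dots,6$. First I would unwind the hermitian hypothesis: from $A_1^\top=A_2$, $A_3^\top=A_4$ and the symmetry of $A_5,A_6$, the equation $W=\overline{W}^\top$ forces $w_2=\overline{w_1}$, $w_4=\overline{w_3}$ and $w_5,w_6\in\{\pm1\}$. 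Because the $A_j$ are real and commute they are normal, so the primitive idempotents $E_k$ are self-adjoint; hence $W=\sum_k\gamma_kE_k$ hermitian makes every $\gamma_k$ real, which is also plainly visible from the computation below.

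Next I would compute $\gamma_k=\sum_{j=0}^6w_jp_{k,j}$ from the rows of \eqref{P6}, exploiting the many zeros. The crucial point is that rows $1$ and $2$ are supported only on columns $0,1,2,5$, so $\gamma_1,\gamma_2$ involve only $w_1$ and $w_5$:
\begin{align*}
\gamma_1&=1-w_5-2b\,\mathrm{Im}(w_1), & \gamma_2&=1-w_5+2b\,\mathrm{Im}(w_1).
\end{align*}
Adding these gives $\gamma_1+\gamma_2=2(1-w_5)\in\{0,4\}$, whereas $\gamma_1=\pm2b$ and $\gamma_2=\pm2b$ force $\gamma_1+\gamma_2\in\{-4b,0,4b\}$. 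Since $b=2^{e-1}\ge4$, the only possibility is $w_5=1$, and then $\gamma_1=-2b\,\mathrm{Im}(w_1)=\pm2b$ together with $|w_1|=1$ yields $w_1=\pm i$, i.e.\ $\mathrm{Re}(w_1)=0$.

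Substituting $w_5=1$ and $\mathrm{Re}(w_1)=0$ into the remaining rows collapses $\gamma_3,\dots,\gamma_6$ to
\begin{align*}
\gamma_3&=b(1-w_6)-2b\,\mathrm{Im}(w_3), & \gamma_4&=b(1-w_6)+2b\,\mathrm{Im}(w_3),\\
\gamma_5&=b(1+w_6)+2b\,\mathrm{Re}(w_3), & \gamma_6&=b(1+w_6)-2b\,\mathrm{Re}(w_3).
\end{align*}
Now I would split on $w_6$. If $w_6=1$, then $\gamma_3=-2b\,\mathrm{Im}(w_3)=\pm2b$ gives $w_3=\pm i$, while $\gamma_5=\gamma_6=2b$ hold automatically; this is \eqref{eq:W1}. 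If $w_6=-1$, then $\gamma_3+\gamma_4=4b$ forces $\gamma_3=\gamma_4=2b$, hence $\mathrm{Im}(w_3)=0$ and so $w_3=\pm1$ by unit modulus, after which $\gamma_5=2b\,\mathrm{Re}(w_3)=\pm2b$ and $\gamma_6=-\gamma_5$ are satisfied; this is \eqref{eq:W2}. Since all these steps are equivalences, the same computation proves the converse, so the two displayed families are exactly the hermitian complex Hadamard matrices in $\mathfrak{A}$.

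The computation is entirely elementary, so the ``obstacle'' is only organisational: one must carry the unit-modulus constraints $|w_1|=|w_3|=1$ alongside the linear equations so that no spurious solution survives. The one place where a genuine hypothesis enters is the elimination of the degenerate branch $w_5=-1$ (equivalently $\gamma_1+\gamma_2=4$), which is admissible only for the excluded value $b=1$; this is exactly where $b=2^{e-1}\ge4$ is used. Every other branch is cut off purely by $|w_j|=1$, and $\gamma_5,\gamma_6$ function as a consistency check rather than as independent constraints.
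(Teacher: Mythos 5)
Your proposal is correct, and it reaches the theorem by a route that is recognizably parallel to the paper's but mechanically different. The paper invokes Lemma~\ref{lem:equiv}(iii) and manipulates the polynomials $e_k$ symbolically: it factors $e_2-e_1=4ib(X_1-X_2)(X_5-1)$ to split into the cases $w_1=w_2$ or $w_5=1$, kills the branch $w_1=w_2$ by showing it forces $|w_5|\geq 2b-1>1$, and then, after specializing $X_5=1$, uses the factorizations of $e_1$, $e_4-e_3$ and $e_5-e_6$ to pin down $w_1,w_3,w_4,w_6$. You instead invoke Lemma~\ref{lem:equiv}(ii), note $n=4^e=4b^2$ so that each $\gamma_k=\pm 2b$, and work directly with the (real) linear forms $\gamma_k$ in $\mathrm{Re}(w_1),\mathrm{Im}(w_1),\mathrm{Re}(w_3),\mathrm{Im}(w_3),w_5,w_6$. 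These are two encodings of the same computation --- e.g.\ the paper's factorization of $e_2-e_1$ is exactly your identity $(\gamma_2-\gamma_1)(\gamma_2+\gamma_1)$ with $\gamma_2-\gamma_1\propto(w_1-w_2)$ and $\gamma_1+\gamma_2=2(1-w_5)$ --- but your version has two genuine advantages in presentation: the hermiticity constraints ($w_2=\overline{w_1}$, $w_4=\overline{w_3}$, $w_5,w_6\in\{\pm1\}$) are used from the start so every branch is cut by the clean pigeonhole $\gamma_j+\gamma_k\in\{0,\pm4b\}$ rather than by a separate contradiction argument, and no computer-algebra-style polynomial factorizations or specializations are needed. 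You also correctly isolate where the hypothesis $b=2^{e-1}\geq 4$ enters (excluding $\gamma_1+\gamma_2=4$), which the paper uses implicitly in the inequality $2b-1>1$. Your converse is handled the same way as the paper's (direct verification via Lemma~\ref{lem:equiv}); all six values $\gamma_k$ indeed come out $\pm2b$ for both families \eqref{eq:W1} and \eqref{eq:W2}, so the argument is complete.
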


\begin{proof}
Recall that $\{A_j\}_{j=0}^6$ is the set of the adjacency matrices of $\mathfrak{X}$ given in Theorem~\ref{thm:1}.
Notice that $A_1^T=A_2$, $A_3^T=A_4$, while $A_5$ and $A_6$ are symmetric.
Since $W$ is hermitian, we have $w_1w_2=1$ and $w_3w_4=1$.

Suppose that the matrix $W$ is a complex Hadamard matrix.
Then $(w_i)_{i=1}^6$ is a common zero of the polynomials $e_k$ ($k=1,\ldots,6$) defined in \eqref{eq:ek}.
Since
\begin{equation}\label{01-7}
e_2-e_1=4ib(X_1-X_2)(X_5-1),
\end{equation}
we have $w_1=w_2$ or $w_5=1$.

Suppose first that $w_1=w_2$. After specializing $X_1=X_2$ and $X_1^2=1$, 
we have
$$e_1=(X_5-(2b+1))(X_5+2b-1).$$
Then $|w_5|=|2b\pm1|\geq 2b-1>1$, which is a contradiction.
Therefore, we must have $w_5=1$.
After specializing $X_5=1$, we have
$$e_1=-b^2(X_1-X_2-2i)(X_1-X_2+2i).$$
Hence $w_1=-w_2\in\{\pm i\}$.
Moreover, after specializing $X_5=1$, $X_1=-X_2$ and $X_1^2=-1$, we have
\begin{align}
e_4-e_3&=4ib^2(X_6-1)(X_3-X_4),\label{01-8}\\
e_5-e_6&=4b^2(X_6+1)(X_3+X_4).\label{01-9}
\end{align}
If $w_6=1$, then $w_4=-w_3\in\{\pm i\}$ by \eqref{01-9}.
Therefore we have \eqref{eq:W1}.
If $w_3=w_4$, then $w_3\in\{\pm1\}$ and $w_6=-1$ by \eqref{01-9}.
Therefore we have \eqref{eq:W2}.

Conversely, assume that $W$ is one of the matrices \eqref{eq:W1}, \eqref{eq:W2}.
We show that $W$ is a complex Hadamard matrix.
By Lemma~\ref{lem:equiv}, it suffices to show that
$(w_i)_{i=1}^6$ is common zero of the polynomials \eqref{eq:ek}, and
it is easy to do this.
\end{proof}

We note that the matrix \eqref{eq:W1} 
belongs to the Bose--Mesner algebra of  $\mathfrak{X}_7$ or $\mathfrak{X}_8$  in Table~\ref{table},
depending on $\epsilon_1=\epsilon_2$ or $\epsilon_1=-\epsilon_2$.
Also, the matrix \eqref{eq:W2}
belongs to the Bose--Mesner algebra of $\mathfrak{X}_2$ or $\mathfrak{X}_4$ in Table~\ref{table},
depending on $\epsilon_2=1$ or $\epsilon_2=-1$.
Therefore, no proper fusion scheme of $\mathfrak{X}$ contains 
all the matrices \eqref{eq:W1} and \eqref{eq:W2} in its Bose--Mesner algebra $\mathfrak{A}$.
We also note that Ma \cite{Ma} considered association schemes which are
invariant under the multiplication by $\T$. 
The only association schemes
in Table~\ref{table} which are invariant under the multiplication by $\T$
is $\mathfrak{X}_7$, and it has the first eigenmatrix described by
\cite[Theorem 7]{Ma}.

%%%%%%%%%%

\end{document}